\numberwithin{equation}{section}
\newtheorem{thm}{Theorem}[section]
\newtheorem{rem}[thm]{Remark}
\newcommand{\mcc}{\mathcal{C}}
\newcommand{\mci}{\mathcal{I}}
\newcommand{\mcl}{\mathcal{L}}
\newcommand{\mbbr}{\mathbb{R}}
\newcommand{\al}{\alpha}
\newcommand{\del}{\delta}
\newcommand{\sig}{\sigma}
\newcommand{\ep}{\epsilon}
\newcommand{\D}{\Delta}
\newcommand{\gam}{\gamma}
\newcommand{\p}{\partial}
\newcommand{\cil}{\xrightarrow{\mcl}} 
\newcommand{\diag}{\mathop{\rm diag}} 
\newcommand{\trace}{\mathop{\rm trace}} 
\def\ds#1{\displaystyle{#1}} 
\def\nn{\nonumber}
\def\wp{Wiener process}
\def\lp{L\'{e}vy process}
\def\sumi{\sum_{i=1}^{N}}
\newcommand{\pr}{P} \newcommand{\E}{E}
\newcommand{\var}{\mathrm{var}}
\newcommand{\cov}{\mathrm{cov}}
\def\rev#1{\textcolor{black}{#1}}
\newcommand{\tz}{\theta_{0}}
\newcommand{\tes}{\hat{\theta}_{N}}
\newcommand{\bes}{\hat{\beta}_{N}}
\newcommand{\ves}{\hat{v}_{N}}
\title[On local likelihood asymptotics for Gaussian mixed-effects model]
{On local likelihood asymptotics for Gaussian mixed-effects model with system noise}
\author[T. Imamura]{Takumi Imamura}
\address{Biostatistics Center, Shionogi \& Co., Ltd., Osaka, Japan}
\author[H. Masuda]{Hiroki Masuda}
\address{
Graduate School of Mathematical Sciences, University of Tokyo, 3-8-1 Komaba Meguro-ku Tokyo 153-8914, Japan.
}
\email{hmasuda@ms.u-tokyo.ac.jp}
\author[H. Tajima]{Hayato Tajima}
\address{
Graduate School of Mathematics, Kyushu University, 744 Motooka Nishi-ku Fukuoka 819-0395, Japan.
}
\date{\today}
\begin{document}
\setlength{\baselineskip}{4.5mm}

\maketitle

\begin{abstract}
The Gaussian mixed-effects model driven by a stationary integrated Ornstein-Uhlenbeck process has been used for analyzing longitudinal data having an explicit and simple serial-correlation structure in each individual. However, the theoretical aspect of its asymptotic inference is yet to be elucidated.
We prove the local asymptotics for the associated log-likelihood function, which in particular guarantees the asymptotic optimality of the suitably chosen maximum-likelihood estimator.
We illustrate the obtained asymptotic normality result through some simulations for both balanced and unbalanced datasets.
\end{abstract}

\section{Introduction}

\subsection{Setup and objective}

We consider the local likelihood asymptotics for the Gaussian linear mixed-effects integrated Ornstein-Uhlenbeck (IOU) model originally introduced in \cite{TayCumSy94}, in which the dynamics of the $i$th individual is described by
\begin{equation}
Y_i(t) = X_i(t)^{\rev{\top}}\beta +Z_i(t)^{\rev{\top}}b_i+ W_i(t) +\epsilon_i(t)
\label{hm:model}
\end{equation}
for $i=1,\cdots,N$ and a given fixed time horizon $t\in[0,T]$, where the ingredients are given as follows.
\begin{itemize}
\item We observe $\{(t_{ij},X_i(t_{ij}), Y_i(t_{ij}), Z_i(t_{ij}))\}_{j=1}^{n_i}$ for each $i=1,\cdots,N$, with
\begin{equation}
\sup_N \max_{i\leq N} n_i < \infty,
\nn
\end{equation}
where $0\equiv t_{i0}<t_{i1}<\dots<t_{i n_i}\le T$ for each $i$, and where $X_i(t)\in\mbbr^{p_\beta}$ and $Z_i(t)\in\mbbr^{p_b}$ are non-random explanatory variables (processes) satisfying that
\begin{equation}
\rev{
\sup_{t\in[0,T]} \max_{i\leq N} \left( |X_i(t)|\vee |Z_i(t)| \right) = O(1).
}
\nn
\end{equation}
Here and in what follows, the order and asymptotic symbols are used for $N\to\infty$ unless otherwise mentioned.

\item $\beta\in\mbbr^{p_\beta}$ is fixed-effect unknown parameter, and the random effects $b_1,b_2,\dots \in \mbbr^{p_b}$ are i.i.d. $N_{p_b}(0,G(\gam))$ for some \rev{known} function $G(\gam):\,\mbbr^{p_\gam}\to\mbbr^{p_b}\otimes\mbbr^{p_b}$.

\item $W_i(t)$ is a system noise described by the i.i.d. (centered) integrated Gaussian Ornstein-Uhlenbeck process
\begin{equation}
W_i(t) = \int_0^t \zeta_i(s) ds
= \frac{\zeta_i(0)}{\al}(1-e^{-\al t}) + \frac{\tau}{\al} \int_0^t (1-e^{-\al(t-v)}) dw_i(v)
\label{hm:int-ou.sol}
\end{equation}
for $\zeta_1,\zeta_2,\dots$ being i.i.d. stationary Gaussian OU process of the form
\begin{equation}
\zeta_i(t) =\int_{-\infty}^t e^{-\al(t-s)}\tau dw_i(s) \sim N\left(0,\,\frac{\tau^2}{2\al}\right),
\label{hm:sOU_def}
\end{equation}
which equals a solution process to the stochastic differential equation
\begin{equation}
d\zeta_i(t)=-\al \zeta_i(t) dt+\tau dw_i(t),
\nonumber
\end{equation}
with $w_1,w_2,\dots$ denoting i.i.d. standard {\wp es} and both $\al>0$ and $\tau>0$ being unknown parameters.

\item $\ep_i(t)$ denotes a measurement error at time $t$ for the individual $i$. We assume that $\ep_1,\ep_2,\dots$ are independent centered 
Gaussian white noises such that for each $i$, $\cov_\theta[\ep_i(t_{ij}),\ep_i(t_{ik})]=\sig^2 \del_{jk}$ ($\del_{jk}$ denotes the Kronecker delta).

\item $\{b_i\}$, $\{W_i\}$, and $\{\ep_i\}$ are \rev{mutually independent of one another}.

\end{itemize}
The model allows us to look at irregularly spaced and different-number observations across individuals and also missing values for some of the variables;
use of a Gaussian process for such simple and transparent correlation-structure modeling goes back to \cite{Dig88}.
The time-integrated character is suitable for many applications where sample paths of system noise, hence those of the objective time series, are smoother than non-differentiable diffusion-type models; as we will briefly mention in Remark \ref{hm:rem_model.extension}, our asymptotic framework could handle several other processes for $W_i$.
The covariance structure of the system noise is expressed only through the two parameters $\al$ and $\tau$ in a unified way.
From \eqref{hm:intou.cov} below, we see that the process $W_i$ approximates: for $j\ne k$, with $c:=\tau/\al>0$ being fixed,
\begin{itemize}
\item A scaled Wiener process where $H_{i;jk}(\al,\tau)\approx c^2 \min(t_{ij},t_{ik})$ for $\al\to\infty$;
\item A Gaussian white noise process where $H_{i;jk}(\al,\tau)\approx 0$ for $\al\to 0$.
\end{itemize}
In the context of the random effect models, the parameter $\al>0$ is referred to as ``the degree of derivative tracking'' (a degree of maintaining the same trajectory over time):
the process $W_i$ becomes degenerate (to the process identically zero) for $\al\to 0$ with fixed $c>0$ (then $\tau\to 0$), as can be seen from the expression \eqref{hm:sOU_def}.

Our objective is to study the local asymptotic property of the associated likelihood function for estimating the finite-dimensional parameter
\begin{equation}
\nn
\theta := \left(\beta, \gam, \al,\tau,\sig^2 \right) 
\in \mbbr^{p_\beta}\times\mbbr^{p_\gam}\times(0,\infty)^3 \subset\mbbr^p,
\end{equation}
where $p:=p_\beta + p_\gam + 3 \ge 5$ denotes the dimension of $\theta$.
The statement is given in Section \ref{hm:sec_main}.
We will present some numerical experiments in Section \ref{hm:sec_simulations}.

\subsection{Some literature review in brief}

After \cite{TayCumSy94} introduced the model \eqref{hm:model}, several works studied its application to specific areas\rev{, for example HIV study for clinical markers of AIDS \cite{BosTayLaw1998}}.
Then, \cite{SyTayCum97} extended the model to the case of bivariate response and applied it to analyzing AIDS data, in particular predictions of future observations and cause-and-effect relationship therein.
Later on, \cite{ZhaLinRazSow98} introduced a semiparametric extension, by adding a nonparametric mean function of time.
More recently, \cite{HugKenSteTil17} developed an optimization algorithm and \cite{hughes2017_stata} did \texttt{xtmixediou} using Stata's matrix programming language.
As a further application example, we refer to \cite{GSACZS22}, where the authors studied Bayesian regularization of a related model and applied it to analyzing CD4 yeast cell-cycle genomic data: Bayesian ridge, lasso, and elastic net approaches were considered together with computational aspects of the posterior.

Nevertheless, a related theoretical study from an asymptotic viewpoint seems missing in the literature.
The primary scope of this paper is to derive the local asymptotics for the maximum-likelihood estimator (MLE), providing us with the fundamental notion of an asymptotically efficient estimator.
See also Remarks 
\ref{hm:rem_LAN}, \ref{hm:rem_future.work}, and \ref{hm:rem_model.extension} for some related details and issues.

\section{Local asymptotics}
\label{hm:sec_main}

For notational convenience, let us write
\begin{equation}
\xi_{ij} = \xi_i(t_{ij})
\nonumber
\end{equation}
for $\xi=X$, $Y$, $Z$, $W$, and $\ep$.
Further, let $X_i=(X_{ij}) \in \mbbr^{n_i}\otimes\mbbr^{p_\beta}$, $Y_i=(Y_{ij}) \in \mbbr^{n_i}$, $Z_i=(Z_{ij}) \in \mbbr^{n_i}\otimes\mbbr^{p_b}$, $W_i=(W_{ij}) \in \mbbr^{n_i}$, and $\ep_i=(\ep_{ij}) \in \mbbr^{n_i}$.
With these shorthands and \eqref{hm:model}, we have the expression
\begin{equation}
Y_i = X_i\beta +Z_ib_i+ W_i +\epsilon_i
\nn
\end{equation}
for the sample from the $i$th individual.
We denote by
\begin{equation}
v=(\gam, \al, \tau, \sig^2)
\nonumber
\end{equation}
the parameters contained in the covariance matrix of $Y_i$; $\gam_k$ and $v_l$ denote the $k$th and $l$th components of $\gam$ and $v$, respectively.
Let $\pr_{\theta}$ denote the distribution of $(\{b_i\},\{W_i\},\{\ep_i\})$, and write $\E_\theta$ and $\cov_\theta$ for the associate expectation and covariance, respectively.

The process $W_i$ is centered in the sense that $\E_\theta[W_i(t)]=0$ for each $t$. 
By the expression \eqref{hm:int-ou.sol} 
\rev{and the stationarity of $\zeta_i(\cdot)$}, 
we obtain the following specific covariance structure $H_i(\al,\tau)=:(H_{i;jk}(\al,\tau))_{j,k}$:
\begin{align}
H_{i;jk}(\al,\tau) &:= \cov_\theta\left[W_{ij}, W_{ik}\right]\nn \\
&=\frac{1}{\al^2}(1-e^{-\al t_{ij}})(1-e^{-\al t_{ik}}) \E_\theta[\zeta_i(0)^2] \nn\\
&{}\qquad + \frac{\tau^2}{\al^2}\int_0^{t_{ij}\wedge t_{ik}} (1-e^{-\al (t_{ij}-s)})(1-e^{-\al (t_{ik}-s)}) ds
\label{hm:intou.cov+}\\
&=\frac{\tau^2}{2\al^3}\left(2\al \min(t_{ij},t_{ik})+e^{-\al t_{ij}}+e^{-\al t_{ik}}-1-e^{-\al |t_{ij}-t_{ik}|}\right).
\label{hm:intou.cov}
\end{align}
We have (under $\pr_\theta$)
\begin{equation}
Y_i \overset{\pr_\theta}{\sim} N_{n_i}\left( X_i\beta,\, Q_i(v)\right)
\nonumber
\end{equation}
for $i=1,\dots,N$, where
\begin{equation}
Q_i(v) := Z_i G(\gamma)Z_i^\top + H_i(\alpha,\tau) + \sigma^2 I_{n_i},
\label{hm:Q_def}
\end{equation}
with $I_{p}$ denoting the $p$-dimensional identity matrix.
Here and in what follows, we set the parameter space to be 
\begin{equation}
\Theta = \Theta_\beta \times \Theta_v= \Theta_\beta \times \Theta_\gam \times \Theta_{(\al,\tau,\sig^2)}\subset \mbbr^{p_\beta}\times\mbbr^{p_\gam}\times(0,\infty)^3,
\nonumber
\end{equation}
a domain in $\mbbr^p$, for which the covariances $Q_i(v)$ are uniformly non-degenerate:
\begin{equation}
\forall v\in \Theta_v,\quad 
\rev{
\inf_N \inf_{1\le i\le N}\lambda_{\min}\left(Q_i(v)\right) > 0.
}
\label{hm:A_Q>0}
\end{equation}
Then, the log-likelihood function is well-defined for $\theta\in\Theta$ and is given by
\begin{align}
\ell_N(\theta) &= \sumi \log \phi_{n_i}\left( Y_i;\,X_i\beta,\, Q_i(v)\right)
\nn\\
&=-\frac{\log(2\pi)}{2}\sum_{i=1}^N n_i-\frac{1}{2}\sum_{i=1}^N\left\{\log\left|Q_i\left(v\right)\right|
+\left(Y_i-X_i\beta\right)^\top Q_i\left(v\right)^{-1}\left(Y_i-X_i\beta\right)\right\}.
\label{hm:log-LF}
\end{align}

\medskip

We write
\begin{equation}
\D_N(\theta) = \frac{1}{\sqrt{N}} \p_{\theta}\ell_{N}(\theta)
\nonumber
\end{equation}
for the normalized score function, where $\p_\theta$ denotes the partial-differentiation operator with respect to $\theta$.
For a multilinear form $M=\{M_{i_1 i_2 \dots i_m}\}$, we will write $M[u_{i_1},\dots,u_{i_m}]=\sum_{i_1,\dots,i_m} M_{i_1 i_2 \dots i_m} u_{i_1}\dots u_{i_m}$.

\begin{thm}
\label{hm:thm_local.LF.asymp}
Fix any $\theta_0 = (\beta_0,v_0) =(\beta_0, \gamma_0, \alpha_0, \tau_0 , \sigma^2_0) \in \Theta$ as a true value of $\theta$.
Suppose the following conditions \rev{hold}:
\begin{itemize}
\item The function $G(\gam):\, \Theta_\gam \to\mbbr^{p_b}\otimes\mbbr^{p_b}$ is of class $\mcc^3(\overline{\Theta_\gam})$.

\item There exist symmetric-matrix-valued $\mcc^1(\overline{\Theta_v})$-functions $A(v)$ and $U(v)=(U_{jk}(v))_{j,k}$ satisfying that for each $v\in\Theta_v$ \rev{and all $j, k = 1, \dots, p_\gam +3$}, 
\begin{align}
& \frac{1}{N}\sum_{i=1}^N X_i^\top Q_i(v)^{-1}X_i \to A(v),
\label{hm:thm_A1}\\
& \frac1N \sumi 
\frac{1}{2}\trace\left\{Q_i(v)^{-1}\left(\partial_{v_k}Q_i(v)\right)Q_i(v)^{-1}\left(\partial_{v_j}Q_i(v)\right)\right\}
\to U_{jk}(v),
\label{hm:thm_A2}
\end{align}
and that both $A(v)$ and $U(v)$ are positive-definite uniformly in $v$ oevr each compact $K_v\subset\Theta_v$.
\end{itemize}
Then, the following statements hold under $\pr_{\tz}$.
\begin{enumerate}
\item 
For any bounded sequence $(u_N)_{N\ge 1}\subset\mbbr^p$,
\begin{equation}
\ell_{N}\left(\tz+\frac{1}{\sqrt{N}}u_N\right) - \ell_{N}\left(\tz\right) 
= \D_N(\tz)[u_N] - \frac{1}{2} \mci(v_0) [u_N^{\otimes 2}] + o_{p}(1),
\nn
\end{equation}
where $\D_N(\tz) \cil N_p(0,\mci(v_0))$ and
\begin{equation}
\mci(v) := \diag\left(A(v), ~U(v)\right).
\nonumber
\end{equation}

\item 
There exists a local maximum point $\tes$ of $\ell_{n}(\theta)$ with $\pr_{\tz}$-probability tending to $1$, for which
\begin{equation}
\sqrt{N}(\tes -\tz) = \mci(v_0)^{-1}\D_N(\tz) + o_{p}(1) \cil N_{p}\left(0,\, \mci(v_0)^{-1} \right).
\nonumber
\end{equation}
\end{enumerate}
\end{thm}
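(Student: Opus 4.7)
The approach is the textbook route to LAN via a third-order Taylor expansion of the log-likelihood. The log-likelihood decomposes as $\ell_N(\theta)=\sumi \ell_{N,i}(\theta)$ with $\ell_{N,i}(\theta):=\log\phi_{n_i}(Y_i;\,X_i\beta,\,Q_i(v))$, a sum of \emph{independent} row-terms of uniformly bounded dimension. The hypothesis \eqref{hm:A_Q>0}, combined with the $\mcc^3$-smoothness of $G$ and the boundedness of $X_i$, $Z_i$, $t_{ij}$, yields uniform upper and lower eigenvalue bounds for $Q_i(v)$ and uniform bounds for the first three $v$-derivatives of $Q_i(v)$ and $Q_i(v)^{-1}$ on every compact $K_v\subset\Theta_v$; this is the silent workhorse behind every moment estimate below.

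First I would compute the score. Writing $r_i(\beta):=Y_i-X_i\beta$ and $S_{i,k}(v):=Q_i(v)^{-1}\p_{v_k}Q_i(v)$,
\begin{align}
\p_\beta \ell_{N,i}(\theta) &= X_i^\top Q_i(v)^{-1} r_i(\beta), \nn\\
\p_{v_k}\ell_{N,i}(\theta) &= -\tfrac12 \trace S_{i,k}(v) + \tfrac12\, r_i(\beta)^\top Q_i(v)^{-1} S_{i,k}(v)\, r_i(\beta). \nn
\end{align}
Under $\pr_{\tz}$ the residuals $r_i(\beta_0)\sim N_{n_i}(0,Q_i(v_0))$ are independent. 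The standard Gaussian identities $\cov[r^\top A r,\,r^\top B r]=2\trace(AQBQ)$ and $\cov[r,\,r^\top A r]=0$ for $r\sim N(0,Q)$ give the three blocks of $\cov_{\tz}[\p_\theta \ell_{N,i}(\tz)]$ as $X_i^\top Q_i(v_0)^{-1}X_i$, $\tfrac12\trace\{Q_i^{-1}\p_{v_j}Q_i\,Q_i^{-1}\p_{v_k}Q_i\}$, and $0$; summing, dividing by $N$, and invoking \eqref{hm:thm_A1}--\eqref{hm:thm_A2} yields $\var_{\tz}[\D_N(\tz)]\to \mci(v_0)$. A Lyapunov CLT for independent triangular arrays --- whose $(2+\del)$-moment condition is immediate from Gaussianity of $r_i(\beta_0)$ together with the uniform matrix bounds --- then gives $\D_N(\tz)\cil N_p(0,\mci(v_0))$.

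Next, an analogous computation of $\p_\theta^2\ell_{N,i}(\theta)$ and application of the same Gaussian identities shows $\E_{\tz}[-N^{-1}\p_\theta^2\ell_N(\tz)]\to \mci(v_0)$; the variances of the summands are $O(1)$ uniformly, so a weak law for independent arrays upgrades this to $-N^{-1}\p_\theta^2 \ell_N(\tz)\cip \mci(v_0)$. Differentiating once more produces $\p_\theta^3\ell_{N,i}(\theta)$, a polynomial of degree at most $3$ in $r_i(\beta)$ whose matrix coefficients are uniformly bounded on $\{(\beta,v):\,|\beta-\beta_0|\le 1,\,v\in K_{v_0}\}$ for any compact neighbourhood $K_{v_0}$ of $v_0$, whence $\sup_{|u|\le K}N^{-3/2}\bigl|\p_\theta^3\ell_N(\tz+N^{-1/2}u)[u^{\otimes 3}]\bigr|=O_p(N^{-1/2})$ for every $K<\infty$. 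Substituting these ingredients into the third-order Taylor expansion of $\ell_N(\tz+N^{-1/2}u_N)-\ell_N(\tz)$ delivers part (1).

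Part (2) then follows by a standard local-argmax argument. The random quadratic $u\mapsto \D_N(\tz)[u]-\tfrac12\mci(v_0)[u^{\otimes 2}]$ is strictly concave (since $\mci(v_0)$ is positive definite) with unique maximiser $u_N^\ast:=\mci(v_0)^{-1}\D_N(\tz)=O_p(1)$; part (1), combined with a compactness argument on any sufficiently large ball $\{|u|\le K\}$, implies that $u\mapsto \ell_N(\tz+N^{-1/2}u)-\ell_N(\tz)$ is, with $\pr_{\tz}$-probability tending to one, uniformly close to that quadratic, and this produces a local maximiser $\tes$ satisfying $\sqrt{N}(\tes-\tz)=u_N^\ast+o_p(1)$. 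The main technical obstacle I anticipate is not any individual step but the systematic bookkeeping of the first three matrix derivatives of $\log|Q_i(v)|$ and $Q_i(v)^{-1}$, and the verification that all moments remain uniformly controlled in $i$ and $N$; the uniform non-degeneracy \eqref{hm:A_Q>0}, the smoothness of $G$, and the explicit form \eqref{hm:intou.cov} are precisely what make those uniform bounds available.
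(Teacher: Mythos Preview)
Your proposal is correct and follows the same derivative computations the paper carries out, but the final packaging differs. The paper does not argue part~(1) and part~(2) separately via a direct Taylor expansion and an argmax argument; instead it verifies two \emph{uniform-in-$\theta$} conditions, namely
\[
\sup_{\theta\in K}\pr_\theta\!\left[\,|\mci_N(\theta)-\mci(v)|>\ep\,\right]\to 0
\quad\text{and}\quad
\sup_{\theta\in K}\pr_\theta\!\left[\,N^{-1/2}\sup_{|\theta'-\theta|\le cN^{-1/2}}|\p_\theta\mci_N(\theta')|>\ep\,\right]\to 0,
\]
and then invokes Sweeting's criterion (\emph{Ann.\ Statist.}\ 1980) as a black box to obtain both the LAN expansion and the existence and asymptotic linearity of a good root $\tes$ in one stroke. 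Your route is slightly more elementary and self-contained: you only need pointwise convergence at $\tz$ for the score CLT and the Hessian LLN, plus a local third-derivative bound, and you supply the argmax argument yourself. The paper's route buys you both conclusions simultaneously from a single external theorem, at the price of checking the convergence of $\mci_N(\theta)$ uniformly over compacts rather than just at $\tz$; the underlying Gaussian moment identities and matrix-derivative bookkeeping are identical in both approaches.
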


Before the proof, we make a couple of remarks.


\begin{rem}\normalfont
Since the terminal time $T>0$ is fixed throughout, it is not essential that $\al>0$ for obtaining the local asymptotic results in Theorem \ref{hm:thm_local.LF.asymp}; even when $\al\le 0$, the covariance matrix $H_i(\al,\tau)$ is well-defined by \eqref{hm:intou.cov+}, and then \eqref{hm:A_Q>0} remains valid. 
However, it should be noted that the mean-reverting feature of the process $\zeta_i(t)$ only holds for $\al>0$ and the expression \eqref{hm:intou.cov} is based on \eqref{hm:sOU_def}.
\end{rem}

\begin{rem}[Asymptotic efficiency]\normalfont
\label{hm:rem_LAN}
There are several important implications and consequences of Theorem \ref{hm:thm_local.LF.asymp} worth mentioning.
Theorem \ref{hm:thm_local.LF.asymp}(1) shows the local asymptotic normality (LAN) of the family $\{\pr_\theta\}_{\theta\in\Theta}$, based on which the classical asymptotic theory enables us to define the asymptotic efficiency of regular estimators.
\rev{
Below we give a brief account; among others, we refer to \cite[Chapter 2]{BasSco83} and \cite[Section 3]{Jeg82} for general accounts.
In our context, ``any'' estimators $\tes^\ast$ satisfying that
\begin{equation}
\sqrt{N}(\tes^\ast -\tz) = \mci(v_0)^{-1}\D_N(\tz) + o_{p}(1)
\label{def:ACe}
\end{equation}
are regular. Hence, the LAN property implies that the Haj\'{e}k-Le Cam asymptotic lower bound for the quadratic loss functions is in force:
\begin{equation}
\liminf_{n\to\infty} \E_{\tz}\left[ \left| 
\sqrt{N}(\tes^\ast - \tz) \right|^2 \right]
\ge \int \big|\mci(v_0)^{-1/2}z\big|^2 \phi(z)dz = \trace\left(\mci(v_0)^{-1}\right),
\label{lower.bound}
\end{equation}
where $\phi(z)$ denotes the density of $N_p(0,I_p)$.
Thus, under the LAN property, we may call any regular estimator $\tes^\ast$ satisfying \eqref{def:ACe} asymptotically efficient;
the terminology ``efficient'' stems from the minimality of the asymptotic covariance, and also from the asymptotically maximal concentration.
In particular, Theorem \ref{hm:thm_local.LF.asymp} ensures both \eqref{def:ACe} and \eqref{lower.bound} with $\tes^\ast=\tes$, hence $\tes$ is asymptotically efficient in the above sense. Since \eqref{def:ACe} entails the asymptotic normality $\sqrt{N}(\tes^\ast -\tz) \cil N_{p}\left(0,\, \mci(v_0)^{-1} \right)$, it is worth noting that the convergence of moments $\E_{\tz}[\{\sqrt{N}(\tes -\tz)\}^{\otimes 2}] \to \mci(v_0)^{-1}$ holds as soon as the sequence $\{|\sqrt{N}(\tes -\tz)|^2\}_N$ is uniformly integrable.
We also note that the following Studentized version for any asymptotically efficient estimator $\tes^\ast=(\bes^\ast,\ves^\ast)$ can be easily derived:
\begin{equation}
\diag\big(\hat{A}_N, ~\hat{U}_N\big)^{1/2} \sqrt{N} (\tes^\ast -\tz) \cil N_{p}(0, I_p),
\nonumber
\end{equation}
with
\begin{align}
\hat{A}_N &:= \frac{1}{N}\sum_{i=1}^N X_i^\top Q_i(\ves^\ast)^{-1}X_i,
\nn\\
\hat{U}_N &:= \frac1N \sumi 
\frac{1}{2}\trace\left\{Q_i(\ves^\ast)^{-1}\left(\partial_{v}Q_i(\ves^\ast)\right)Q_i(\ves^\ast)^{-1}\left(\partial_{v}Q_i(\ves^\ast)\right)\right\},
\nonumber
\end{align}
where the elements of $\hat{U}_N$ are more specified by $\partial_{\gam}Q_i(v) = Z_i \p_\gam G(\gam) Z_i^\top$, $\partial_{(\al,\tau)}Q_i(v) = \p_{(\al,\tau)}H_i(\al,\tau)$, and $\partial_{\sig^2}Q_i(v) = I_{n_i}$; recall \eqref{hm:Q_def} and $v=(\gam,\al,\tau,\sig^2)$.
}
\end{rem}

\begin{rem}[Theoretical refinments]\normalfont
\label{hm:rem_future.work}
A good root $\tes$ of the likelihood equation $\p_\theta\ell_N(\theta)=0$ in Theorem \ref{hm:thm_local.LF.asymp} is not a single choice and also may not necessarily be the best from a computational point of view. 
Our interest here is in the first-order asymptotic inference, and we did not consider the conventional REML (restricted maximum-likelihood) estimator.
Likewise, a popular way of construction of such an estimator $\tes^\ast$ includes the stepwise one: usually, one first uses the (un-weighted) least-squares for $\beta$, and then proceeds with a variance-component estimation; note that we then obtain a globally consistent estimator as was studied in \cite{Taj21_m.thesis} for joint estimation of all the components of $\theta$ under a series of regularity conditions.
We could derive the asymptotic distribution of the above-mentioned stepwise estimator even when the sources of randomness in the model are non-Gaussian, in particular even when the driving process in $W_i(t)$ is a non-Gaussian {\lp}; this point will have an attractive feature, for it ensures that the inference procedure becomes robust against the misspecified Gaussian assumptions. Moreover, it would be possible to deduce the uniform tail-probability estimate of the associated Gaussian quasi-maximum likelihood estimator, enabling us to conclude the asymptotic efficiency in the sense mentioned in Remark \ref{hm:rem_LAN}, and further study the model selection issue by constructing appropriate information criteria.
We will report the related details elsewhere.
\end{rem}

\begin{rem}[Other system-noise processes]\normalfont
\label{hm:rem_model.extension}
Although we are focusing on the IOU process for $W_i$, the same likelihood analysis based on the low-frequency sampling for each individual could formally go through other system-noise processes parametrized by a finite-dimensional parameter as long as $\cov_\theta[W_i(s),W_i(t)]$ exist and is explicitly given.
For example, $W_1,\dots,W_{N}$ could be i.i.d. copies of a (drift-free) scaled fractional Brownian motion:
$W_i$ is a centered Gaussian process with stationary increments such that $\E_\theta[W_i(t)]=0$, $\var_\theta[W_i(t)-W_i(s)]=\tau^2|t-s|^{2\mathsf{H}}$, and
\begin{equation}
\cov_\theta[W_i(t),W_i(s)] = \frac{\tau^2}{2}\left( t^{2\mathsf{H}} + s^{2\mathsf{H}} - |t-s|^{2\mathsf{H}}\right),\qquad t,s\ge 0,
\nonumber
\end{equation}
for the scale parameter $\tau>0$ and the Hurst parameter $\mathsf{H}\in(0,1)$;
then, the covariance-matrix parameter is $v=(\gam, \mathsf{H}, \tau, \sig^2)$ and \eqref{hm:intou.cov} becomes $H_i'(\mathsf{H},\tau)=:(H_{i;jk}'(\mathsf{H},\tau))_{j,k}$ with
\begin{equation}
H_{i;jk}'(\mathsf{H},\tau) := \frac{\tau^2}{2}\left( t_{ij}^{2\mathsf{H}} + t_{ik}^{2\mathsf{H}} - |t_{ij} - t_{ik}|^{2\mathsf{H}}\right).
\nonumber
\end{equation}
Correspondingly, we could deduce a variant of Theorem \ref{hm:thm_local.LF.asymp} without any essential change:
we replace $Q_i(v)$ by $Q'_i(v) := Z_i G(\gamma)Z_i^\top + H_i'(\mathsf{H},\tau) + \sigma^2 I_{n_i}$, 
and impose similar assumptions to \eqref{hm:thm_A1} and \eqref{hm:thm_A2}; for the latter, the partial derivative $\p_{(\mathsf{H},\tau)}Q'_i(v)$ are given through
\begin{align}
\p_{\mathsf{H}}H_i'(\mathsf{H},\tau) &= \tau^2 \left( t_{ij}^{2\mathsf{H}}\log(t_{ij}) + t_{ik}^{2\mathsf{H}}\log(t_{ik}) - |t_{ij}-t_{ik}|^{2\mathsf{H}}\log|t_{ij}-t_{ik}| \right),
\nn\\
\p_{\tau}H_i'(\mathsf{H},\tau) &= \tau \left( t_{ij}^{2\mathsf{H}} + t_{ik}^{2\mathsf{H}} - |t_{ij} - t_{ik}|^{2\mathsf{H}}\right).
\nonumber
\end{align}
As a specific application to longitudinal biomedical data, this model was used in \cite{fBm_ME} for empirical analysis of CD4 counts in HIV-positive patients.
Compared with the IOU model, however, the fractional Brownian motion cannot quantitatively capture the degree of derivative tracking.
\end{rem}

\medskip

\begin{proof}[Proof of Theorem \ref{hm:thm_local.LF.asymp}]

We introduce the normalized observed information matrix:
\begin{equation}
\mci_N(\theta) := - \frac{1}{N}\p_{\theta}^{2}\ell_N(\theta).
\nonumber
\end{equation}
We are going to complete the proof by verifying the following two conditions for $N\to\infty$: for any $\ep>0$, $c>0$, and compact $K\subset\Theta$,
\begin{align}
S_{1,N}(\ep,K) &:= \sup_{\theta\in K}\pr_\theta\left[
\left|\mci_N(\theta) - \mci(v)\right| > \ep \right] \to 0,
\label{C1}\\
S_{2,N}(\ep,c,K) &:= \sup_{\theta\in K}\pr_\theta\left[
\frac{1}{\sqrt{N}}\sup_{\theta'\in\Theta:\,|\theta'-\theta|\le c N^{-1/2}}\left| \p_{\theta}\mci_N(\theta') \right| > \ep
\right] \to 0.
\label{C2(ii)}
\end{align}
Using the criterion in \cite{Swe80} (see Theorems 1 and 2 therein), these conditions ensure both claims in Theorem \ref{hm:thm_local.LF.asymp}.

\medskip

To prove the law of large numbers \eqref{C1}, we recall the expression \eqref{hm:log-LF} of the log-likelihood function $\ell_N(\theta)$.
To proceed, we need to compute the partial derivatives of $\ell_N(\theta)$.
Let $H_i^{(\al)}:=\p_\al H_i$ and $H_i^{(\tau)}:=\p_\tau H_i$. By \eqref{hm:intou.cov}, the $(j,k)$th entries of these matrices are given as follows:
\begin{equation*}
\begin{split}
H_i^{(\al)}(\al,\tau)_{j,k}
	&=\frac{\tau^2}{2\alpha^4}\Big( -4\alpha\min\left(t_{ij},t_{ik}\right)-\left(3+\alpha t_{ij}\right)e^{-\alpha t_{ij}}-\left(3+\alpha t_{ik}\right)e^{-\alpha t_{ik}}+3
	\nn\\
	&{}\qquad +\left(3+\alpha|t_{ij}-t_{ik}|\right)e^{-\alpha |t_{ij}-t_{ik}|}\Big),\\
H_i^{(\tau)}(\al,\tau)_{j,k}
	&=\frac{\tau}{\alpha^3}\left(2\alpha \min\left(t_{ij},t_{ik}\right)+e^{-\alpha t_{ij}}+e^{-\alpha t_{ik}}-1-e^{-\alpha |t_{ij}-t_{ik}|}\right).
\end{split}
\end{equation*}
Then, we have the expressions for the first-order derivatives:
\begin{align*}
\p_\beta\ell_N(\theta) &=\sum_{i=1}^N\left\{X_i^\top Q_i\left(v\right)^{-1}Y_i-X_i^\top Q_i\left(v\right)^{-1}X_i\beta\right\},
\nn\\
\partial_{ \gamma_l} \ell_N(\theta)&=\frac{1}{2}\sumi \Big\{\left(Y_i-X_i\beta\right)^\top Q_i(v)^{-1}Z_i\left(\partial_{\gamma_l} G(\gamma)\right)Z_i^\top Q_i(v)^{-1}\left(Y_i-X_i\beta\right) \nn\\
&{}\qquad -\trace\left(Q_i(v)^{-1}Z_i\left(\partial_{\gamma_l} G(\gamma)\right)Z_i^\top \right)\Big\},\\
\partial_{\alpha} \ell_N(\theta)&=\frac{1}{2}\sum_{i=1}^N\left\{\left(Y_i-X_i\beta\right)^\top Q_i(v)^{-1}H_i^{(\alpha)}(\alpha,\tau)Q_i(v)^{-1}\left(Y_i-X_i\beta\right)-\trace\left(Q_i(v)^{-1}H_i^{(\alpha)}(\alpha,\tau)\right)\right\},\\
\partial_{\tau} \ell_N(\theta)&=\frac{1}{2}\sum_{i=1}^N\left\{\left(Y_i-X_i\beta\right)^\top Q_i(v)^{-1}H_i^{(\tau)}(\alpha,\tau)Q_i(v)^{-1}\left(Y_i-X_i\beta\right)-\mathrm{trace}\left(Q_i(v)^{-1}H_i^{(\tau)}(\alpha,\tau)\right)\right\},\\
\partial_{\sigma^2} \ell_N(\theta)&=\frac{1}{2}\sum_{i=1}^N\left\{\left(Y_i-X_i\beta\right)^\top \left(Q_i(v)^{-1}\right)^2\left(Y_i-X_i\beta\right)-\mathrm{trace}\left(Q_i(v)^{-1}\right)\right\}.
\end{align*}

Then, we obtain the expressions for the second-order derivatives:
\begin{align}
\partial^2_\beta \ell_N(\theta) &= -\sum_{i=1}^N X_i^\top Q_i(v)^{-1}X_i,
\nn\\
\partial_{\beta}\p_{v_k} \ell_N(\theta) &= \sum_{i=1}^N X_i^\top Q_i(v)^{-1}\left(\partial_{v_k}Q_i(v)\right)Q_i(v)^{-1}\left(Y_i-X_i\beta\right),
\nn\\
\partial_{v_j}\p_{v_k} \ell_N(\theta) &= \frac{1}{2}\sum_{i=1}^N\Big\{
\left(Y_i-X_i\beta\right)^\top \partial_{v_j}\left\{Q_i(v)^{-1}\left(\partial_{v_k} Q_i(v)\right)Q_i(v)^{-1}\right\}\left(Y_i-X_i\beta\right)
\nn\\
&{}\qquad -\partial_{v_j}\left\{\mathrm{trace}\left(Q_i\left(v\right)^{-1}\left(\partial_{v_k} Q_i(v)\right)\right)\right\}\Big\}.
\nonumber
\end{align}

First, by \eqref{hm:thm_A1} we have (deterministic convergence)
\begin{equation}
-\frac{1}{N}\partial^2_\beta \ell_N\left(\theta\right)
=\frac{1}{N}\sum_{i=1}^NX_i^\top Q_i(v)^{-1}X_i \to A(v)
\nonumber
\end{equation}
for each $\theta$; under \eqref{hm:A_Q>0}, this is valid uniformly in $\theta\in K$ since the derivative $\p_v\{Q_i(v)^{-1}\}$ is bounded over $K$.
Next, since the summands of $\partial_{\beta}\p_{v_k} \ell_N(\theta)$ is $\E_\theta$-expectation zero for each $\theta$ and since $Y_i - X_i\beta \overset{\pr_\theta}{\sim} N_{n_i}\left( 0,\, Q_i(v)\right)$, \rev{by Burkholder's inequality}, we have
\begin{align}
& \sup_{\theta\in K}\E_\theta\left[\left|-\frac{1}{N}\partial_{\beta}\p_{v_k} \ell_N(\theta)\right|^2\right] 
= \frac1N \sup_{\theta\in K}\E_\theta\left[\left|-\frac{1}{\sqrt{N}}\partial_{\beta}\p_{v_k} \ell_N(\theta)\right|^2\right]  \nn\\
&
\rev{
\lesssim \frac{1}{N} \sup_{\theta\in K} \E_{\theta}\left[\frac{1}{N}\sumi|X_i^{\top}Q_i(v)^{-1}(\p_{v_k}Q_i(v))Q_i(v)^{-1}|^2  |Y_i-X_i\beta|^2\right] 
}
\nn \\
&
\rev{
\lesssim \frac1N \frac1N \sumi \sup_{\theta\in K}\E_\theta[|Y_i-X_i\beta|^2]
}
\nn\\
&
\lesssim \frac1N \sup_{\theta\in K} \sup_{i\ge 1} \trace(Q_i(v)) \lesssim \frac1N \to 0.
\label{hm:thm_p1}
\end{align}
It follows that
\begin{equation}
\sup_{\theta \in K}\pr_\theta\left[
\left|-\frac{1}{N}\p_\beta \p_{v_k} \ell_N\left(\theta\right)\right| > \ep
\right] \to 0.
\nonumber
\end{equation}

To manage the remaining $\partial_{v_j}\p_{v_k} \ell_N(\theta)$, we note that
\begin{align*}
	&\E_{\theta}\left[\left(Y_i-X_i\beta\right)^\top \partial_{v_j}\left\{Q_i(v)^{-1}\left(\partial_{v_k} Q_i(v)\right)Q_i(v)^{-1}\right\}\left(Y_i-X_i\beta\right)\right]\\
	&=\E_{\theta}\left[\mathrm{trace}\left\{\partial_{v_j}\left\{Q_i(v)^{-1}\left(\partial_{v_k} Q_i(v)\right)Q_i(v)^{-1}\right\}\left(Y_i-X_i\beta\right)\left(Y_i-X_i\beta\right)^\top \right\}\right]\\
	&=\mathrm{trace}\left\{\partial_{v_j}\left\{Q_i(v)^{-1}\left(\partial_{v_k} Q_i(v)\right)Q_i(v)^{-1}\right\}Q_i(v)\right\}.
\end{align*}
Noting the identities
\begin{align*}
	&\partial_{v_j}\left\{Q_i(v)^{-1}\left(\partial_{v_k}Q_i(v)\right)Q_i(v)^{-1}\right\}Q_i(v)  \\
	&{}\qquad = - Q_i(v)^{-1}\left(\partial_{v_j}Q_i(v)\right)Q_i(v)^{-1}\left(\partial_{v_k}Q_i(v)\right) \\
	&{}\qquad\qquad  + Q_i(v)^{-1}(\partial_{v_j}\p_{v_k}Q_i(v)) - Q_i(v)^{-1}\left(\partial_{v_k}Q_i(v)\right)Q_i(v)^{-1}\left(\partial_{v_j}Q_i(v)\right),
\nn\\
& \partial_{v_j}\left\{\mathrm{trace}\left(Q_i(v)^{-1}\left(\partial_{v_k} Q_i(v)\right)\right)\right\} \nn\\
	&{}\qquad = \mathrm{trace}\left\{-Q_i(v)^{-1}\left(\partial_{v_j}Q_i(v)\right)Q_i(v)^{-1}\left(\partial_{v_k}Q_i(v)\right)+Q_i(v)^{-1}\left(\partial_{v_j}\p_{v_k}Q_i(v)\right)\right\},
\end{align*}
we obtain
\begin{align*}
\E_{\theta}\left[\partial_{v_j}\partial_{v_k} \ell_N(\theta)\right]
	&=\frac{1}{2} \sumi \E_{\theta}\Big[\left(Y_i-X_i\beta\right)^\top \partial_{v_j}\left\{Q_i(v)^{-1}\left(\partial_{v_k} Q_i(v)\right)Q_i(v)^{-1}\right\}\left(Y_i-X_i\beta\right)
	\nn\\
	&{}\qquad 
	-\partial_{v_j}\left\{\mathrm{trace}\left(Q_i(v)^{-1}\left(\partial_{v_k} Q_i(v)\right)\right)\right\}\Big] \nn\\
	&=-\frac{1}{2}\sumi\mathrm{trace}\left\{Q_i(v)^{-1}\left(\partial_{v_k}Q_i(v)\right)Q_i(v)^{-1}\left(\partial_{v_j}Q_i(v)\right)\right\}\end{align*}
for each $\theta$.
This together with \eqref{hm:thm_A2} and a similar estimate to \eqref{hm:thm_p1} concludes that
\begin{align*}
\sup_{\theta \in K}\pr_\theta\left[
\left|-\frac{1}{N}\p_{v_j} \p_{v_k} \ell_N\left(\theta\right) - U_{jk}(v)\right| > \ep
\right] &\leq 
\sup_{\theta \in K}\pr_\theta\left[
\left|-\frac{1}{N}\p_{v_j} \p_{v_k} \ell_N\left(\theta\right) + \frac{1}{N}\E_{\theta}\left[\partial_{v_j}\partial_{v_k} \ell_N(\theta)\right]\right| > \ep
\right] \\
&{}\qquad +
\sup_{\theta \in K}\pr_\theta\left[
\left|-\frac{1}{N}\E_{\theta}\left[\partial_{v_j}\partial_{v_k} \ell_N(\theta)\right] - U_{jk}(v)\right| > \ep
\right] \\
&\to 0.
\end{align*}
The proof of \eqref{C1} is complete.

\medskip

Turning to the asymptotic negligibility \eqref{C2(ii)}, we note the following expressions for the third-order derivatives:
\begin{align}
\partial^3_{\beta} \ell_N(\theta) &= 0,
\nn\\
\partial_{\beta}^2\p_{v_k} \ell_N(\theta) &= \sum_{i=1}^N X_i^\top Q_i(v)^{-1}\left(\partial_{v_k}Q_i\right)Q_i(v)^{-1}X_i,
\nn\\
\partial_{\beta}\p_{v_k}\p_{v_j}\ell_N(\theta) &= -\sum_{i=1}^N X_i^\top \partial_{v_k}\left\{Q_i(v)^{-1}\left(\partial_{v_j}Q_i(v)\right)Q_i(v)^{-1}\right\}\left(Y_i-X_i\beta\right),
\nn\\
\partial_{v_k}\p_{v_j}\p_{v_h}\ell_N(\theta) &= \frac{1}{2}\sum_{i=1}^N
\Big\{(Y_i-X_i\beta)^\top \partial_{v_k}\p_{v_j}\left\{Q_i(v)^{-1}\left(\partial_{v_h}Q_i(v)\right)Q_i(v)^{-1}\right\}\left(Y_i-X_i\beta\right)
\nn\\
&{}\qquad 
-\partial_{v_k}\p_{v_j}\left\{\mathrm{trace}(Q_i(v)^{-1}\partial_{v_h}Q_i(v))\right\} \Big\}.
\nonumber
\end{align}
For each $\theta$, we may and do focus on the set $\overline{B_\del(\theta)} \subset\Theta$, the closed ball at the center $\theta$ with radius $\del>0$ being small enough.
By the above expressions for $\p_{\theta}\mci_N(\theta)$, we have
\begin{align}
& \frac{1}{\sqrt{N}}\sup_{\theta'\in\Theta:\,|\theta'-\theta|\le c N^{-1/2}}\left| \p_{\theta}\mci_N(\theta') \right|
\nn\\
&\lesssim \frac{1}{\sqrt{N}} \sup_{\theta'\in\Theta:\,|\theta'-\theta|\le c N^{-1/2}} 
\frac1N\sumi \left(|Y_i-X_i\beta'|^2 + 1\right) 
\lesssim \frac{1}{\sqrt{N}} \frac1N\sumi \left(|Y_i-X_i\beta|^2 + 1\right).
\nonumber
\end{align}
The $\E_\theta$-expectation of the leftmost side can be bounded by a constant multiple of $N^{-1/2}$ uniformly in $\theta\in K$, concluding that $S_{2,N}(\ep,c,K) \to 0$.
\end{proof}


\section{Numerical experiments}
\label{hm:sec_simulations}
To evaluate the bias and the asymptotic normality of MLEs for the Gaussian linear mixed-effects IOU model, we conducted some simulation studies under two dataset structures following \cite{HugKenSteTil17}: balanced and unbalanced longitudinal data.
On the one hand, the balanced dataset consists of the subject's data where the numbers of time points and measurement time points are equal across all subjects.
On the other hand, for the unbalanced dataset, we allow that the number of time points per subject and the time intervals between consecutive time points need not be equal between subjects and within a subject.

For each Monte Carlo simulation, we generated longitudinal data $\{Y_i(t_{ij})\}_{j = 1}^{n_i}$ for $i = 1, \dots, N$ from the IOU model $\eqref{hm:model}$:
\begin{align*}
Y_i(t_{ij}) = X_i(t_{ij})^\top \beta_0 +Z_i(t_{ij})^\top b_i+ W_i(t_{ij}) +\epsilon_i(t_{ij}),
\nn
\end{align*}
where the ingredients are given as follows.
\begin{itemize}
\item The explanatory variables $X_i(t_{ij}) = (x_1(t_{ij}), x_2(t_{ij})) \in \mbbr^2$, $Z_i(t_{ij}) = (z_1(t_{ij}), z_2(t_{ij})) \in \mbbr^2$ were generated as $x_1(t_{ij}) = t_{ij}$, $x_2(t_{ij}) = 0 \, \text{or} \, 1$ according as the Bernoulli distribution with the parameter $0.5$ before starting Monte Carlo simulation, and $(z_1(t_{ij}), z_2(t_{ij})) = (1, t_{ij})$.
\item The random effect vector $\ds{b_i \sim N_2\left(0,\, \begin{pmatrix}
\gam_{0, 1}^2 & \gam_{0, 2} \\
\gam_{0, 2} & \gam_{0, 3}^2
\end{pmatrix}
\right)}$.

\item The system noise vector $(W_i(t_{ij}))_{j = 1}^{n_i} \sim N_{n_i}(0, H_i(\alpha_0, \tau_0))$.
\item The measurement error vector $(\epsilon_i(t_{ij}))_{j = 1}^{n_i} \sim N_{n_i}(0, \sigma_0^2 I_{n_i})$.
\end{itemize}
The true parameter was given as 
\begin{align*}
(\beta_0, v_0) = (\beta_{0, 1}, \beta_{0, 2}, \gam_{0, 1}, \gam_{0, 2}, \gam_{0, 3}, \alpha_0, \tau_0, \sigma_0) = (-0.25, 0.50, 1.25, 1.00, 1.50, 1.30, 0.40, 1.25). \nn
\end{align*}
The number of time points $n_i$ and measurement time points $\{t_{ij}\}_{j = 1}^{n_i}$ for $i = 1, \dots, N$ were set differently for balanced and unbalanced longitudinal data simulation:
\begin{itemize}
\item For the balanced data simulation, we set the number of time points as $n_i = 20$ and time points $t_{ij} = j$ for all $i = 1, \dots, N$, that is, the time intervals between consecutive time points are equal between subjects and within a subject;
\item For the unbalanced data simulation, we generated the data under the setting that the number of time points $n_i$ was obtained from the integer part of $\textrm{Uniform}(15, 20)$-random number and measurement time points $t_{i1}, \dots, t_{in_i}$ were randomly selected from $\{1,2,\dots,20\}$ before starting the simulation. 
\end{itemize}
We generated 1000 datasets for all the Monte Carlo simulations, and we set the sample size $N = 250$ or $500$ for the balanced and unbalanced longitudinal datasets, respectively. To optimize the log-likelihood function \eqref{hm:log-LF}, we used the built-in $\texttt{optim}$ function in the R software. For optimizations in all simulations, we used an $8$-dimensional vector of values $1$ as the initial value. We used in all optimizations the Nelder-Mead method as an optimization algorithm because of a complication for the first and second derivative functions of the log-likelihood function. 

Tables $\ref{ti:balanced table}$ and $\ref{ti:unbalanced table}$ show the bias and the standard error for each parameter and true parameter, calculated by the Monte Carlo method.
To estimate the inaccuracy of Monte Carlo samples, we introduce the Monte Carlo standard error (MCSE, e.g. \cite{MWC2019}) deified by
\begin{align*}
\text{MCSE} = \sqrt{\frac{1}{M (M - 1)} \sum_{m = 1}^M (\hat{\theta}_m - \bar{\theta})^2}, \nn
\end{align*}
where $M$ denotes the number of iterations of the simulation, $\hat{\theta}_m$ is the estimate of $\theta$ for $m$th repetition, and $\bar{\theta}$ is the sample mean of $\hat{\theta}_m$ across repetitions. 

As shown in Table $\ref{ti:balanced table}$ and Table $\ref{ti:unbalanced table}$, there was little difference between the biases of all parameters in both two dataset structures and sample size settings ($N = 250, 500$). The estimates of the fixed-effect parameters and the variance parameter of the measurement error were unbiased for all simulations. The biases of the variance parameters for the random effects were not large to matter. In contrast, the biases of the variance parameters for the system noise were not negligibly small. One possible reason is that, as can be seen from Figure $\ref{ti:LLplot_unbalanced_N500}$, the optimizations were not successful because of the small curvature around the true value of the log-likelihood function for $(\alpha, \tau)$. The previous study \cite{HugKenSteTil17} recommends a reparametrized Gaussian mixed-effects IOU model as $\alpha$ and $\omega$ ($\omega := \tau^2/\alpha^2$); however, in our simulation studies, the calculated $\omega$ had a large bias. 

Figures $\ref{ti:hist_unbalanced_N500}$ and $\ref{ti:qq_unbalanced_N500}$ show histograms of the Studentized MLEs and normal quantile-quantile plots (Q-Q plots) under the unbalanced longitudinal data setting ($N = 500$), respectively. From these figures, the standard normal approximation seemed to hold for all the MLEs except $\hat{\sigma}_N$. The magnitude of the variance parameter of the measurement error was very small. 

The problem we faced in our numerical experiments was the computational cost of obtaining the MLEs. For example, the average time was about 7 minutes for 1 iteration in the balanced data simulation with $N = 500$.
One possible solution to this problem of computation time is to change the optimization method. The previous study \cite{HugKenSteTil17} recommends using the Newton-Raphson (NR) algorithm in terms of convergence and the time taken to reach convergence. Considering the actual application of this model, it would be better to use the NR method with a low computational cost. In the present study, we do not go into details in this direction.

\begin{table}[h]
\centering
\caption{Bias and MCSE for each MLE in the balanced longitudinal dataset structure}
\label{ti:balanced table}
\vspace{5pt}
\scalebox{0.7}{
\begin{tabular}{cccccccccc}
\toprule
\multicolumn{1}{c}{$N$} & \multicolumn{8}{c}{Bias (MCSE) for each parameter}\\
\cmidrule(lr){2-9}
\multicolumn{1}{c}{} & \multicolumn{1}{c}{$\beta_1$} & \multicolumn{1}{c}{$\beta_2$} & \multicolumn{1}{c}{$\gam_1$} & \multicolumn{1}{c}{$\gam_2$} & \multicolumn{1}{c}{$\gam_3$} & \multicolumn{1}{c}{$\alpha$} & \multicolumn{1}{c}{$\tau$} & \multicolumn{1}{c}{$\sig$} \\

\hline 
\hline

250 & -0.0006 (0.0028) & -0.0026 (0.0043) & -0.0170 (0.0027) & -0.0358 (0.0052) & -0.0111 (0.0022) & 0.8481 (0.0121) & 0.2459 (0.0040) & -0.0026 (0.0005) \\

500 & 0.0008 (0.0022) & -0.0111 (0.0032) & -0.0208 (0.0021) & -0.0565 (0.0044) &  -0.0156 (0.0017) & 0.9095 (0.0102) & 0.2686 (0.0035) & -0.0037 (0.0004)\\
\hline
\end{tabular}
}
\end{table}

\begin{table}[h]
\centering
\caption{Bias and MCSE for each MLE in the unbalanced longitudinal dataset structure}
\label{ti:unbalanced table}
\vspace{5pt}
\scalebox{0.7}{
\begin{tabular}{cccccccccc}
\toprule
\multicolumn{1}{c}{$N$} & \multicolumn{8}{c}{Bias (MCSE) for each parameter}\\
\cmidrule(lr){2-9}
\multicolumn{1}{c}{} & \multicolumn{1}{c}{$\beta_1$} & \multicolumn{1}{c}{$\beta_2$} & \multicolumn{1}{c}{$\gam_1$} & \multicolumn{1}{c}{$\gam_2$} & \multicolumn{1}{c}{$\gam_3$} & \multicolumn{1}{c}{$\alpha$} & \multicolumn{1}{c}{$\tau$} & \multicolumn{1}{c}{$\sig$} \\

\hline 
\hline

250 & 0.0026 (0.0030) & -0.0023 (0.0043) & -0.0195 (0.0028) & -0.0398 (0.0054) & -0.0127 (0.0023) & 0.8182 (0.0120) & 0.2370 (0.0040) & -0.0027 (0.0005) \\

500 & 0.0036 (0.0022) & 0.0019 (0.0033) & -0.0219 (0.0021) & -0.0634 (0.0046) &  -0.0157 (0.0018) & 0.8854 (0.0107) & 0.2610 (0.0036) & -0.0034 (0.0004)\\
\hline
\end{tabular}
}
\end{table}

\begin{figure}[h]
	\begin{center}
			\includegraphics[clip,width=12cm,height=8cm]{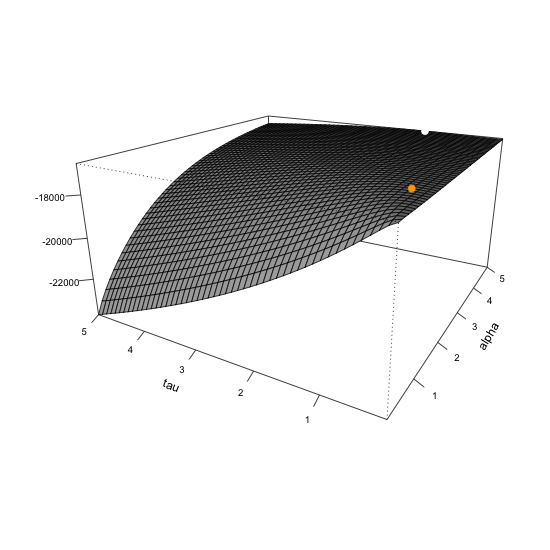}
			\caption{Curved surface of log-likelihood function for $(\alpha, \tau)$ in the case of sample size $N = 500$ and time points $15 \leq n_i \leq 20$; the remaining six parameters are set to be their true values. The orange point is true parameter $(\alpha_0, \tau_0)$. The white point is the calculated MLE.}
			\label{ti:LLplot_unbalanced_N500}
	\end{center}
\end{figure}

\begin{figure}[h]
	\begin{center}
			\includegraphics[clip,width=12cm,height=12cm]{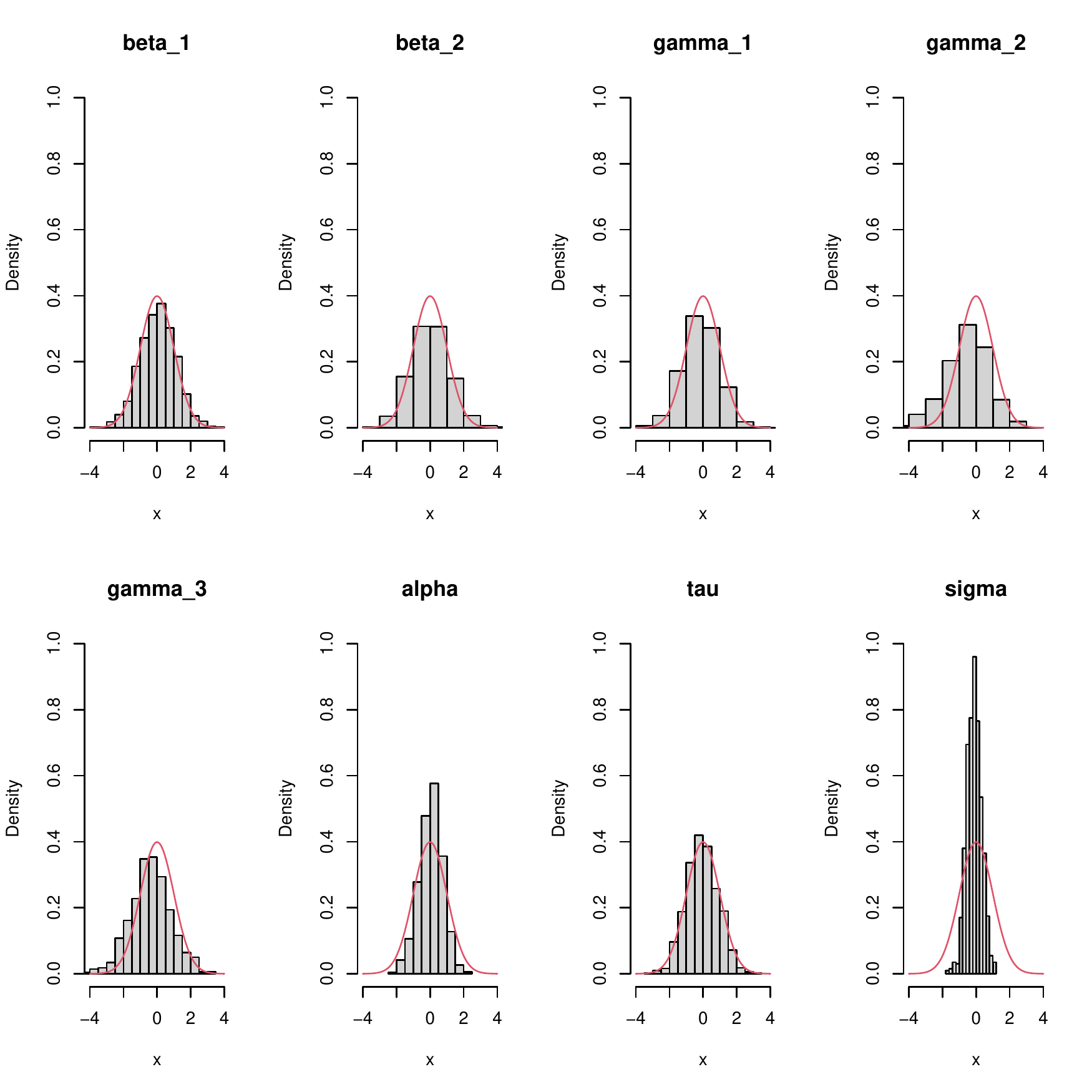}
			\caption{Histograms of the Studentized MLEs for unbalanced longitudinal data ($N$ = 500) and probability density function of Gaussian distribution (red curve).}
			\label{ti:hist_unbalanced_N500}
	\end{center}
\end{figure}

\begin{figure}[h]
	\begin{center}
			\includegraphics[clip,width=12cm,height=12cm]{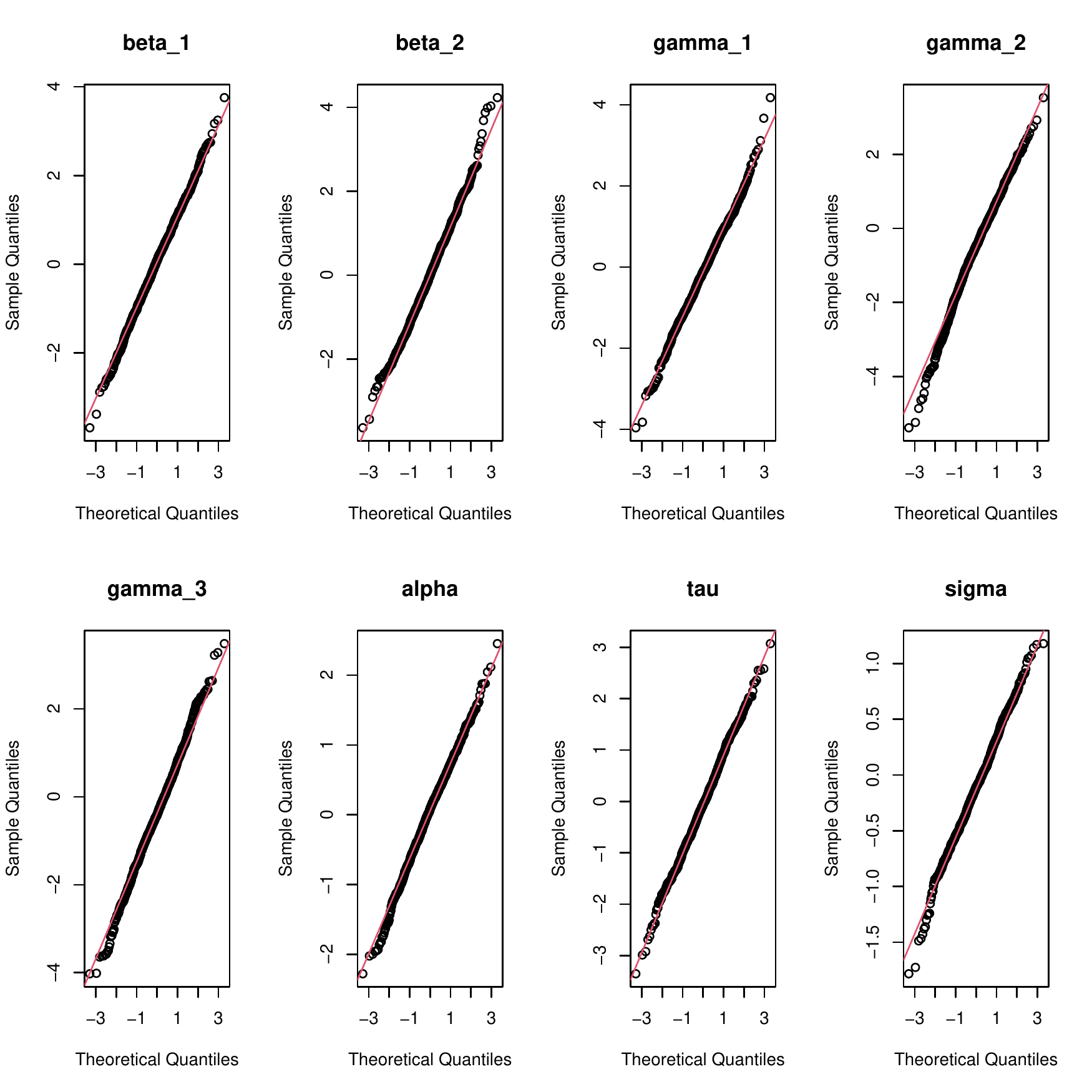}
			\caption{Normal Q-Q Plots of the Studentized MLEs for unbalanced longitudinal data ($N$ = 500).}
			\label{ti:qq_unbalanced_N500}
	\end{center}
\end{figure}

\section{Discussion}
\label{ti:sec_discuss}
The Gaussian mixed-effects IOU model is useful in terms of easy interpretations of serial-correlation structure in each individual. Furthermore, this model can be applied naturally to longitudinal data in which the measurement intervals differ between and within individuals. In this paper, we studied derived the local likelihood asymptotics: the MLE has the asymptomatic normality and the asymptotic efficiency (Theorem \ref{hm:thm_local.LF.asymp} and Remark \ref{hm:rem_LAN}).
Our results will underlie practical developments of this model. 
For this model to be more widely utilized in actual applications, as already mentioned in Remarks \ref{hm:rem_future.work} and \ref{hm:rem_model.extension}, we need further theoretical developments including robustification against distributional misspecification and model selection criteria.

\bigskip

\noindent
{\bf Acknowledgement.}
The authors thank the reviewer for the helpful comments.
This work was partially supported by JST CREST Grant Number JPMJCR2115, Japan, and by JSPS KAKENHI Grant Number 22H01139 (HM).

\bigskip

\bibliographystyle{abbrv} 

\end{document}